\documentclass[11pt]{amsart}

\usepackage{amsfonts,amsmath,latexsym,amssymb,verbatim,amsbsy,amsthm}

\usepackage{graphicx}
\usepackage{nicefrac}
\usepackage{dsfont}
\usepackage{mathrsfs}
\usepackage{cancel}
\usepackage[normalem]{ulem}
\usepackage{color}


\usepackage[top=1in, bottom=1in, left=1in, right=1in]{geometry}

\usepackage{pgf,pgfarrows,pgfnodes,pgfautomata,pgfheaps,pgfshade}
\usepackage{fancyhdr}
\usepackage{soul}
\usepackage{wrapfig}
\usepackage{nicefrac}
\usepackage{epsfig}

\usepackage[colorlinks=true, pdfstartview=FitV, linkcolor=blue,citecolor=red, urlcolor=blue]{hyperref}

\numberwithin{equation}{section}
\numberwithin{figure}{section}

\newtheorem{theorem}{Theorem}[section]
\newtheorem{corollary}[theorem]{Corollary}
\newtheorem{lemma}[theorem]{Lemma}
\newtheorem{proposition}[theorem]{Proposition}
\theoremstyle{definition}
\newtheorem{definition}[theorem]{Definition}
\newtheorem{example}[theorem]{Example}
\newtheorem{remark}[theorem]{Remark}

\renewcommand{\geq}{\geqslant}

\renewcommand{\leq}{\leqslant}

\newcommand{\ds}{\displaystyle} 
\newcommand{\be}{\begin{equation}}
\newcommand{\ee}{\end{equation}}

\ifx
\theoremstyle{plain}
\newtheorem{THEOREM}{Theorem}[section]

\newtheorem{theorem}[THEOREM]{Theorem}

\theoremstyle{definition}

\theoremstyle{remark}
\theoremstyle{question}

\newtheorem{remark}[THEOREM]{Remark}

\fi


\def\d{{\textnormal{d}}}
\def\bx{{\mathbf x}}
\def\by{\bx'} 
\def\bz{{\mathbf z}}
\def\bu{{\mathbf u}}
\def\bup{\bu'}
\def\bv{{\mathbf v}}
\def\v{v}

\def\dy{\d\by}

\def\dv{\d\bv}

\def\deg{{\textit{deg}}}
\renewcommand{\geq}{\geqslant} 
\renewcommand{\leq}{\leqslant}  
\def\extif{\ifx}

\def\bbphi{\Phi} 
\def\step{\tau} 
\def\delE{{\delta \mathscr E}}
\def\delV{{\delta \mathscr V}}
\def\hf{\frac{1}{2}}
\def\deg{deg}
\def\erg{\kappa} 
\def\bo{{\boldsymbol \omega}}
\def\Egraph{{\sf E}}
\def\Vgraph{{\sf V}}

\begin{document}
\title[Long time and large crowd dynamics of  Cucker-Smale  models]{Long time and large crowd dynamics\\of fully discrete  Cucker-Smale alignment models}


\author[Eitan Tadmor]{Eitan Tadmor}
\address[Eitan Tadmor]{Department of Mathematics and Institute for Physical Sciences \& Technology (IPST), University of Maryland, College Park}
\email{{\tt tadmor@umd.edu}}

\date{April 1, 2022}

\subjclass{92D25, 70F40, 35Q35.}

\keywords{flocking, alignment, spectral analysis, strong solutions, critical thresholds.}

\thanks{\textbf{Acknowledgment.} Research was supported in part by ONR grant N00014-2112773.}

\dedicatory{\bigskip{\large To Ron \mbox{De\!Vore} for many years of great friendship}}

\begin{abstract}
We provide a bird's eye view on  developments in analyzing the long time, large crowd behavior of Cucker-Smale alignment dynamics. We consider a  class of (fully-)discrete models, paying particular attention to  general alignment protocols in which agents, with possibly time-dependent masses, are driven by a  large class of heavy-tailed communication kernels.  The presence of time-dependent masses allows, in particular, non-symmetric communication. While revisiting known results in the literature, we also  shed  new light of various aspects on the long time  flocking/swarming  behavior, driven by the decay of energy fluctuations and heavy-tailed connectivity.  We also discuss the large crowd dynamics in terms of the hydrodynamic description of the corresponding Euler alignment models.
\end{abstract}

\maketitle
\setcounter{tocdepth}{2}
\tableofcontents

\section{The Cucker-Smale model}
{\LARGE I}\hspace*{-0.051cm}n 1998, Craig Reynolds won  a Scientific and Engineering Award of the Academy of Motion Picture Arts and Sciences for ``\emph{pioneering contributions to the development of three dimensional computer animation for motion picture production}'', \cite{Rey98}.    Reynolds was recognized for his work on realistic simulations of \emph{flocking}, \cite{Rey87},  proposing a  collective dynamics   of `bird-like objects'   (or `boids') which are driven by pairwise interactions acting in three  zones of \emph{repulsion}, \emph{alignment} and \emph{attraction}. 
A  similar 3Zone protocol  is found in  a broad spectrum  of models for collective dynamics in different contexts: in modeling swarming dynamics in \emph{ecology} --- from fish, birds  and sheep to bacteria,  locust and  insects, 
 \cite{Aok82,Par82,Rey87,BCE91,KL93,VCBCS95,TT98,EKWG98,BDT99,WB01,
 EK01,CKJRF02,PVG02,Ben03,CF03,TB04,GR05,HCK05,CKFL05,DCBC06,
 OAGM06,CS07a,CS07b,Bal08,HH08,YBTBG08,BEBS09,LX10,JK10,
 Cav10,SASBJ11,CCGPS12,Bia12,GWBL12,TDOEKB12,Bia14,
 AA15,Gin15,Fon16,Jia17,PT17,Ari18,Cal18,USTB19,Liu20}; modeling  social dynamics of \emph{human interactions} --- from pedestrians, exchange of opinions and ratings  to markets and marketing, \cite{DeG74,Axe84,Axe97,Kra00,Hel01,HK02,WDA05,BHOT05,BeN05,
 Wei06,Lor07,BHT09,CFL09,Hel09,FG10,Hel10,PT11,BT15,RDW18,BCD19}; and in modeling the dynamics of \emph{sensor-based networks} --- ranging from macro-molecules and metallic rods to  control and  mobile robot networks, \cite{Kur75,OK91,JLM03,OSM04,CMB06,OS06,JE07,ZP07,ZEP11,Rin12,BV13,MT14b}. 
The common theme of the different models is crowd dynamics dictated by  pairwise interactions between members of the crowd which are  viewed as \emph{agents}.
A main question of interest is to understand how the small scale pairwise interactions within the crowd, are self-organized into a large scale patterns of the whole crowd, so that ``the whole is greater than the sum of its parts''. One then refers to the \emph{emergent behavior} of the crowd, where the larger patterns are  realized by a crowd forming a flock, reaching a consensus, admitting a synchronized state, aggregate into one or more clusters, etc.

\smallskip\noindent
\paragraph{{\bf The class Cucker-Smale alignment models}} Pairwise attraction and repulsion are familiar from particle physics, for example, particle dynamics driven Coulomb and other singular potentials, \cite{ST97,Ser17,LS17,Ser18,LS18}. Here, we focus our attention on \emph{alignment} dynamics, driven by  pairwise interactions in which  agents steer towards \emph{average heading}. We consider the  agent-based system in which  $N$ agents, identified  with (position, velocity) pairs $(\bx_i(t),\bv_i(t)):  {\mathbb R}_+ \mapsto (\Omega,{\mathbb R}^d)$ and subject to prescribed initial conditions, $(\bx_i(0),\bv_i(0))=(\bx_{i0},\bv_{i0})\in (\Omega,{\mathbb R}^d)$, are driven by
\begin{equation}\label{eq:CS}
\left\{ \ \ \begin{split}
{\bx}_i(t+\step)&=\bx_i(t)+\step\bv_i(t)\\
{\bv}_i(t+\step)&=\bv_i(t) + \step\sum_{j\in {\mathcal N}_i} m_j\phi_{ij}(t)(\bv_j(t)-\bv_i(t)). 
\end{split}\right.
\end{equation}
The dynamics is dictated by  a  symmetric  \emph{communication kernel},  
\[
\phi(\bx,\by)=\phi(\by,\bx)\geq 0.
\]
 Its dynamic values, $\phi_{ij}(t)=\phi(\bx_i(t),\bx_j(t))$,
 encode the `rule of engagement' between agents, and in particular  the neighborhood ${\mathcal N}_i=\{j : \phi_{ij}(t)>0\}$,  which  contributes to the steering of a `boid' positioned at $\bx_i$.
The spatial domain $\Omega$ is either ${\mathbb T}^d$ or ${\mathbb R}^d$, so that boundaries are avoided, and $\step>0$ is a small, possibly variable time-step, $\step=\step(t)$. Different agents, $(\bx_i,\bv_i)$, are assumed to have different masses, $m_i$, or other constant traits attributed to an  agent positioned at $\bx_i$.\newline 
We refer to \eqref{eq:CS} as the class of Cucker-Smale (C-S) models for alignment dynamics. Different models are attached to different $\phi$'s and different $m_i$'s.
The original model of Cucker \& Smale (C-S) \cite{CS07a,CS07b}  is \emph{the} canonical model for the class of alignment dynamics \eqref{eq:CS}  with $\phi(\bx,\bx')\sim (1+|\bx-\bx'|)^{-\beta}, \ \beta>0$, which assumes a uniform mass distribution $m_i\equiv \nicefrac{1}{N}$,
\begin{equation}\label{eq:equiCS}
{\bv}_i(t+\step)=\bv_i(t) + \frac{\step}{N}\sum_{j\in {\mathcal N}_i} \phi_{ij}(t)(\bv_j(t)-\bv_i(t)).
\end{equation}
The work of Cucker \& Smale   attracted a considerable attention in the literature and motivated the study of many variants of the C-S alignment models; we refer to  \cite{She07,PKH10,Pes15,BDT17-19,Tad17,CHL17, CHJK19,Shv21} and the references therein.
In particular, a more general alignment model based on the formation of  `blobs' or multi-flocks of agents with different masses was derived  in \cite{ST21b}. In other models, different $m_i$'s can be identified with different intrinsic `traits' of different agents, such as  degree,  temperature, \cite{MT11, HKR18, Jin18, CHJK19, Has21}. We further elaborate on one example.

\smallskip\noindent
\paragraph{{\bf The Motsch-Tadmor model}} 
If each of the  terms contributing to the C-S alignment on the right of \eqref{eq:equiCS},  $ \sum_j \phi_{ij}(\bv_j-\bv_i)$,  is of the same  ${\mathscr O}(1)$-order, then its total action of order ${\mathscr O} (N)$ will peak at time $t = {\mathscr O} (\nicefrac{1}{N})$. Thus, as noted in \cite[\S2] {ST21b}, the pre-factor $\nicefrac{1}{N}$ is C-S model \eqref{eq:equiCS} is in fact a \emph{scaling} factor, so that the dynamics peaks at the desired time $t \sim {\mathscr O} (1)$.\newline
In \cite{MT11} we advocated a more realistic  scaling which is adapted to 
spatial variability in the intensity of different alignment terms,
\begin{equation}\label{eq:MT}
{\bv}_i(t+\step)=\bv_i(t) + \frac{\step}{\sum \limits_{k\in {\mathcal N}_i} \phi_{ik}(t)}\sum_{j\in {\mathcal N}_i} \phi_{ij}(t)(\bv_j(t)-\bv_i(t)).
\end{equation}
Here the scaling depends on the \emph{degree} of  different agents, 
\[
\deg_i:=\sum_{k\in {\mathcal N}_i} \phi_{ik}(t).
\]
It should be  emphasized that the   communication array in M-T model, $\ds \Big\{\frac{1}{\deg_i}\phi_{ij}\Big\}$ is not symmetric. Nevertheless, it does fit  the general  symmetric framework of C-S class \eqref{eq:CS} with a proper choice of `masses'
$\displaystyle m_i =\frac{1}{L}\deg_i$, and  \emph{symmetric} interactions
$\ds \widetilde{\phi}_{ij}= L\phi_{ij}\frac{1}{\deg_i}\frac{1}{\deg_j}$, recovering \eqref{eq:MT},
\begin{equation}\label{eq:MTS}
{\bv}_i(t+\step) = \bv_i(t) + \step\sum_{j\in {\mathcal N}_i} m_j\widetilde{\phi}_{ij}(t)(\bv_j(t)-\bv_i(t)), \quad m_i=\frac{1}{L}\deg_i, \quad \widetilde{\phi}_{ij}= \frac{1}{L}\phi_{ij}\frac{1}{m_i}\frac{1}{m_j}
\end{equation}
The scaling parameter $L$ has no effect on the alignment and was introduced here in order to re-scale the total mass\footnote{\label{foot:MT}For example,, in the case of long range all-to-all communication where $\phi_{ij}={\mathcal O}(1)$, then $\deg_j={\mathcal O}(N)$ and we set $L=N^2$ so that $M=\frac{1}{L}\sum_j\deg_j ={\mathcal O}(1)$.} so that $M:=\sum_i m_i ={\mathcal O}(1)$.
In this  case, however, the degrees vary in time, $\ds m_i=\frac{1}{L}\deg_i(t)$ and the discussion below needs to be  modified to include time-dependent masses. This will be further explored in section \ref{sec:MT} below.\newline
As another example, we mention a similar situation that arises in  the  context of \emph{thermodynamic} C-S model \cite{HKR18,CHJK19}, where $m_i$'s can be identified with the different temperatures $m_i=\theta_i(t)$ of agents with re-scaled velocities 
$\displaystyle \frac{1}{\theta_i}\bv_i$. Again, one needs to address the time-dependence of the temperatures which are dictated by a separate dynamics.

\section{Communication kernels}
The  dynamics of \eqref{eq:CS} is dictated by  a  symmetric  \emph{communication kernel},  
$\phi(\cdot,\cdot)\geq 0$.
Where do these communication kernels come from? they arise from a combination of empirical and phenomenological considerations.
 A sample of the large literature  can be found in 
\cite{Kur75,TT98,Kra00,WB01,CF03,JLM03,CKFL05,GR05,WDA05,Wei06,ZP07,Jac10,ZEP11,SASBJ11,VZ12,Gin15} and the references therein.
We mention several primary examples.\newline
A large part of current literature is devoted to the  generic class of  \emph{metric-based} kernels,
\[
\phi(\bx,\by)=\varphi(|\bx-\by|).
\]
The choice of metric kernels $\varphi(r)=\mathds{1}_{[0,R_0]}$ and   $\varphi(r)=(1+ r)^{-\beta}, \ 0<\beta<1$    are found in the seminal works of  Vicsek et. al. \cite{VCBCS95} and respectively Cucker \& Smale \cite{CS07a}. They are  motivated by a \emph{phenomenological} reasoning that the strength of pairwise interactions is  short-range or at least decreasing with the relative distance, ``\emph{birds of feather flock together}'' \cite{MSLC01}; this should be contrasted with an opposite heterophilous protocol, \cite{MT14a}, based on tendency to attract diverse groups so that $\varphi(r)$ is \emph{increasing} over its compact support.
A particular sub-class of such metric-based  protocols are the \emph{singular}   kernels, $\varphi(r)=r^{-\beta},  0<\beta <d+2$,
which   emphasize near-by neighbors, $r\ll 1$, over those farther away,
\cite{Pes15,CCMP17,ST17a,PS17,ST18,DKRT18,MMPZ19}. The case of non-summable  kerenls, $\beta=d+2s, s\in (0,1)$  correspond to Riesz kernels and could be properly interpreted as principle values of  summation in the commutator form \cite{ST17a}
\[
\sum_j \phi_{ij}m_j(\bv_j-\bv_i)
= \sum_j \frac{m_j\bv_j-m_i\bv_i}{|\bx_j-\bx_i|^{d+2s}}
-\sum_j \frac{m_j-m_i\hspace*{0.6cm}}{|\bx_j-\bx_i|^{d+2s}}\bv_i.
\]
 An important source for communication kernels  are  detailed observations.  As a prime example we mention  the class of \emph{topologically-based} kernels,  dictated  by the \emph{size} of the crowd in between agents positioned at $\bx$ and $\by$
 \begin{equation}\label{eq:whatismu}
   \phi(\bx,\by)=\varphi(\mu(\bx,\by)), \quad \mu(\bx,\by):=\frac{1}{N}\#\{k:\, \bx_k\in {\mathcal C}(\bx,\by)\}.
\end{equation}
Here, ${\mathcal C}(\bx,\by)$ is a pre-determined communication region  enclosed between $\bx$ and $\by$. 
In particular, if  ${\mathcal C}$ is shifted to $R$-ball centered at $\bx$, one ends up with the \emph{non-symmetric} topological kernel \cite{MT11}
$\displaystyle \phi(\bx,\by)=\frac{\varphi(|\bx-\by|)}{\mu(B_R(\bx))}$. 
Topologically-based communication was observed in starflag project reported in 
\cite{Bal08,Cav08,Cav10,CCGPS12}, 
where birds react to the number of closest neighbors rather than their metric distance,  and in pedestrian dynamics \cite{RDW18}, 
where communication is decreasing in more crowded regions, and was  analyzed in \cite{Bal08,BD16,ST20b}.\newline 
More on topologically-based kernels can be found in \cite{OSM04,Li08,CCGPS12,Has13,BD17}\newline
As a third example, we mention  \emph{random-based} communication protocols found in chemo- and photo-tactic dynamics,  \cite{HL09a}, 
the Elo rating system, \cite{JJ15,DTW19}, 
voter and related opinion-based models, \cite{BeN05}, 
or a random-batch method and consensus-based optimization.
 \cite{DAWF02,CFL09,GWBL12,PTTM17,JLL20}. 
Another class of communication  kernels are those learned from the data, 
\cite{BPK16,LZTM19,MLK19}.  
Finally, we mention  communication  kernels  which are derived from `higher order' principles; for example, a minimum entropy principle \cite{Bia12,Bia14},
 and the paradigm of anticipation \cite{GTLW17}.

\section{Long time dynamics}
A key aspect in the long time behavior of \eqref{eq:CS} is the decay in time of the fluctuations of velocities $\{\bv_i-\bv_j\}$. Velocity fluctuations can be measured in a weighted-$\ell^2$  average sense quantifying \emph{energy fluctuations},  or in a uniform  sense quantifying the $\ell^\infty$-\emph{diameter} of the discrete crowd of velocities.
\subsection{Energy fluctuations}\label{sec:energy}
 We let $\delE(t)$ denote the \emph{energy fluctuations}, scaled by the total mass\footnote{Here and below,
$|\cdot|$ denotes an arbitrary vector norm on ${\mathbb R}^d$.}
\[
\delE(t):=\frac{1}{2M^2}\sum_{i,j}|\bv_i(t)-\bv_j(t)|^2m_im_j, \qquad M=\sum_i m_j.
\]
Thus, $\delE(t)$ is the weighted $\ell^2$-diameter of the set of velocities $\{\bv_i\}_{i=1}^N$ at time $t$. Equivalently, we can express it as fluctuations around the mean velocity $\overline{\bv}$
\begin{equation}\label{eq:mean}
\delE(t)=\frac{1}{M}\sum_i |\bv_i(t)-\overline{\bv}(t)|^2m_i, \qquad \overline{\bv}:=\frac{1}{\sum_i m_i}\sum_i m_i\bv_i(t)
\end{equation}
The energy balance encoded in \eqref{eq:CS}${}_2$ implies (for simplicity we suppress the time dependence on $t$ on the right-hand side)
\begin{equation}\label{eq:energy}
\left\{\ \begin{split}
\frac{1}{M}\sum_i m_i&|\bv_i(t+\step)|^2 -
\frac{1}{M}\sum_i m_i|\bv_i(t)|^2 \\
 & = \frac{2\step}{M}\sum_i \big\langle m_i\bv_i, \sum_j m_j\phi_{ij}(\bv_j-\bv_i)\big\rangle  + \frac{\step^2}{M}\sum_im_i\big|\sum_j m_j\phi_{ij}(\bv_j-\bv_i)\big|^2.
\end{split}\right.
\end{equation}
Since the communication kernel is symmetric, $\phi_{ij}=\phi_{ji}$, the total momentum is conserved
\begin{equation}\label{eq:momentum}
{\mathscr M}(t+\step)-{\mathscr M}(t) =\frac{\step}{M}\sum_{i,j}m_im_j\phi_{ij}(\bv_j-\bv_i)=0, \qquad {\mathscr M}(t):=\frac{1}{M}\sum_i m_i\bv_i(t).
\end{equation}
This implies that the incremental change in energy of the left of \eqref{eq:energy}
is the same as the incremental change of energy fluctuations. Indeed, 
\[
\frac{1}{M}\sum_i m_i|\bv_i(t)|^2 \equiv \frac{1}{2M^2}\sum_{i,j}|\bv_i(t)-\bv_j(t)|^2m_im_j + \frac{1}{M^2}\big|\sum_i m_i\bv_i(t)\big|^2= \delE(t)+\big|{\mathscr M}(t)\big|^2,
\]
and the same applies at $t+\step$,
\[
\frac{1}{M}\sum_i m_i|\bv_i(t+\step)|^2 \equiv \delE(t+\step)+\big|{\mathscr M}(t+\step)\big|^2,
\]
and since the  squared terms  on the right of the last two equalities are the same, we find
\begin{subequations}\label{eqs:abc}
\begin{equation}\label{eq:abc}
\frac{1}{M}\sum_i m_i |\bv_i(t+\step)|^2 -\frac{1}{M}\sum_i m_i|\bv_i(t)|^2 =\delE(t+\step) - \delE(t).
\end{equation}
 We now come to the main point, namely, that the alignment operator of CS dynamics is coercive in the sense that
\begin{equation}\label{xyz}
\frac{2}{M}\sum_i\big\langle m_i\bv_i, \sum_j m_j\phi_{ij}(\bv_j-\bv_i)\big\rangle = - \frac{1}{M}\sum_{i,j}\phi_{ij}|\bv_i-\bv_j|^2m_im_j.
\end{equation}
The \emph{weighted} fluctuations on the  right is identified as  the \emph{enstrophy}.
We can bound the last squared term on the right of \eqref{eq:energy} in terms of the enstrophy and the maximal \emph{weighted degree},
$\deg_+(t):=\max_i\sum_j \phi_{ij}(t)m_j$
\begin{equation}\label{rst}
\frac{1}{M}\sum_i m_i\big|\sum_j m_j\phi_{ij}(\bv_j-\bv_i)\big|^2 \leq 
 \deg_+(t)\frac{1}{M}\sum_{i,j} \phi_{ij}|\bv_j-\bv_i\big|^2m_im_j;
\end{equation}
\end{subequations}
Inserting \eqref{eqs:abc} back into the energy balance \eqref{eq:energy} we find
\begin{equation}\label{eq:final}
\delE(t+\step) - \delE(t)  \leq -\step(t) \big(1-\deg_+(t)\cdot\step(t)\big)\frac{1}{M}\sum_{i,j} \phi_{ij}(t)|\bv_j(t)-\bv_i(t)\big|^2m_im_j.
\end{equation}
Observe that we now pay attention to the time dependence on the right; in particular, the possibly variable time step, $\step=\step(t)$,  and the  time-dependent communication weights, $\phi_{ij}(t)=\phi(\bx_i(t),\bx_j(t))$.\newline
We let ${\mathbb A}(t)$ denote the $N\times N$ \emph{adjacency matrix} ${\mathbb A}(t)=\{\phi_{ij}(t)\}$  encoding the edges of communication at time $t$,  and define $\Delta_{{\mathbf m}}{\mathbb A}(t)$ is the \emph{weighted graph Laplacian} 
\begin{equation}\label{eq:weighted}
\left(\Delta_{{\mathbf m}}{\mathbb A}\right)_{\alpha\beta}=\left\{\begin{array}{ll}
-\ds \phi_{\alpha\beta}\sqrt{m_\alpha m_\beta}, & \alpha\neq \beta\\ \\
\ds \sum_{\gamma\neq \alpha}\phi_{\alpha\gamma}m_{\gamma} & \alpha=\beta.
\end{array}\right.
\end{equation}
The weighted graph Laplacian, weighted by the masses ${\mathbf m}=(m_1,\ldots, m_N)$, has real eigenvalues, $\lambda_1=0 \leq \lambda_2 \leq \ldots \lambda_N$. This generalizes the usual notion of graph Laplacian, e.g.,\cite{Mer94,Chu97}, corresponding to the case of uniform weight, $m_i={\mathcal O}(\nicefrac{1}{N})$. We now  summarize the computations above, quantifying the decay of energy fluctuations in terms of the spectral gap, $\lambda_2\big(\Delta_{\mathbf m}{\mathbb A}(t)\big)$.

\begin{theorem}[{\bf Decay of energy fluctuations}]\label{thm:main1} Consider the C-S dynamics \eqref{eq:CS} with time-steps small enough such that 
\begin{equation}\label{eq:CFL}
\step(t) \cdot \max_i\sum_j \phi_{ij}(t)m_j \leq \hf.
\end{equation}
Then the following bound of energy fluctuations holds
\begin{equation}\label{eq:result}
\delE(t_n) \leq exp\Big\{-\sum_{k=0}^{n-1} \lambda_2(t_k)\step(t_k)\Big\}\delE_0, \quad \lambda_2(t)=\lambda_2\big(\Delta_{\mathbf m}{\mathbb A}(t)\big), \ \  t_{k+1}=t_k+\step(t_k).
\end{equation}
\end{theorem}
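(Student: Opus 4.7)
The plan is to combine the one-step energy bound already derived in the excerpt with a spectral-gap (Poincar\'e-type) inequality for the weighted graph Laplacian $\Delta_{\mathbf m}{\mathbb A}(t)$, and then iterate in time.

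First I would take the bound \eqref{eq:final} as the starting point and invoke the CFL condition \eqref{eq:CFL}, which gives $1-\deg_+(t)\step(t)\ge \hf$, to reduce it to the cleaner one-step dissipation
\[
\delE(t+\step)-\delE(t)\le -\frac{\step(t)}{2M}\sum_{i,j}\phi_{ij}(t)|\bv_i(t)-\bv_j(t)|^2 m_im_j.
\]
Everything after this is about closing the right-hand side in terms of $\delE$ itself.

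Next, and this is the heart of the matter, I would establish the weighted Poincar\'e inequality
\[
\frac{1}{M}\sum_{i,j}\phi_{ij}(t)|\bv_i-\bv_j|^2 m_im_j \ \ge\ 2\lambda_2(t)\,\delE(t),
\]
using the Courant--Fischer variational characterization of the second eigenvalue. The matrix $L:=\Delta_{\mathbf m}{\mathbb A}$ from \eqref{eq:weighted} is symmetric and positive semi-definite; a direct computation annihilates the zero-mode $x_1=(\sqrt{m_1},\ldots,\sqrt{m_N})$, and the change of variables $u_i=\sqrt{m_i}\,w_i$ converts its Dirichlet form into
\[
u^{T}L u \ =\ \hf\sum_{i,j}\phi_{ij}\, m_im_j\,(w_i-w_j)^{2}.
\]
Choosing $w_i=\bv_i-\overline{\bv}$ componentwise makes $u\perp x_1$, since $\sum_i m_i(\bv_i-\overline{\bv})=0$ by the definition \eqref{eq:mean} of $\overline{\bv}$; applying the Rayleigh quotient coordinate-by-coordinate on ${\mathbb R}^{d}$ and summing produces exactly the displayed Poincar\'e bound.

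Combining the two steps gives the geometric contraction
\[
\delE(t+\step)\ \le\ \bigl(1-\step(t)\lambda_2(t)\bigr)\,\delE(t)\ \le\ e^{-\step(t)\lambda_2(t)}\,\delE(t),
\]
where the second inequality is the elementary $1-x\le e^{-x}$, valid for all real $x$ (so no sign condition on $\step\lambda_2$ need be imposed beyond the CFL \eqref{eq:CFL}). Iterating across $t_0,t_1,\ldots,t_n$ and telescoping the exponents yields the stated estimate \eqref{eq:result}. The main obstacle I anticipate is the spectral step: one must align the $\sqrt{m_\alpha m_\beta}$ scaling built into the off-diagonal entries of $\Delta_{\mathbf m}{\mathbb A}$ with the $m_im_j$ weighting that naturally appears in the enstrophy on the right of \eqref{eq:final}, and the substitution $u_i=\sqrt{m_i}w_i$ together with the identification of $\sqrt{m}$ as the zero mode is precisely what makes these two weightings match. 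The remaining ingredients --- CFL bookkeeping and the discrete Gr\"onwall iteration --- are mechanical once the Poincar\'e inequality is in hand.
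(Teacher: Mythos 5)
Your proposal is correct and follows essentially the same route as the paper: the one-step dissipation \eqref{eq:final} combined with the CFL condition \eqref{eq:CFL}, the weighted Poincar\'e inequality \eqref{eq:laplacian} to close the enstrophy in terms of $\delE$, and then $1-x\le e^{-x}$ with telescoping. The only difference is that the paper simply cites \cite{HT21} for \eqref{eq:laplacian}, whereas you supply its proof via Courant--Fischer with the substitution $u_i=\sqrt{m_i}\,w_i$ and the zero mode $(\sqrt{m_1},\ldots,\sqrt{m_N})$ --- that derivation is correct and makes the argument self-contained.
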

\begin{proof}
We return to  the energy fluctuations bound \eqref{eq:final}. It remains to relate the enstrophy on the right of \eqref{eq:final} to the energy fluctuations on the left.
To this end, we use the following sharp lower bound on the enstrophy \cite[\S3]{HT21}, expressed in terms of its spectral gap $\lambda_2(t)=\lambda_2\big(\Delta_{{\mathbf m}}{\mathbb A}(t)\big)$,
\begin{equation}\label{eq:laplacian}
\sum_{i,j} \phi_{ij}(t)|\bv_j(t)-\bv_i(t)\big|^2m_im_j \geq \frac{\lambda_2(t)}{M} \sum_{i,j} |\bv_j(t)-\bv_i(t)\big|^2m_im_j, \quad \lambda_2= \lambda_2(\Delta_{{\mathbf m}}{\mathbb A}).
\end{equation}
Inserted into \eqref{eq:final}, the time-step restriction \eqref{eq:CFL} and \eqref{eq:laplacian} yield
\[
\begin{split}
\delE(t+\step) & \leq \delE(t) - \frac{\step(t)}{2}\lambda_2(t)\frac{1}{M^2}\sum_{i,j} |\bv_j(t)-\bv_i(t)\big|^2m_im_j \\
 & \ \ = \Big(1-\step(t) \lambda_2(t)\Big)\delE(t) \leq e^{-\step(t)\lambda_2(t)}\delE(t),
 \end{split}
\]
and \eqref{eq:result} follows.
\end{proof}

\noindent
\begin{remark}[{\bf Graph connectivity}]
The  weighted graph Laplacian $\Delta_{{\mathbf m}}{\mathbb A}$ is symmetrizable, with real eigenvalues $\lambda_1=0\leq \lambda_2 \leq \ldots \leq \lambda_N$. The weighted Poinacr\'{e} inequality \eqref{eq:laplacian} provides a sharp lower bound on the enstrophy in terms of the spectral gap 
$\lambda_2(\Delta_{{\mathbf m}}{\mathbb A})>0$, which reflects the connectivity of the weighted graph $(\Vgraph,\Egraph)$, where vertices of $\Vgraph$ tag the positions $\{\bx_i\}$ and the edges $\Egraph$ quantify the connections $\{\phi_{ij}\}$. The intricate aspect here is the interplay between the  graph which is time dependent, $(\Vgraph(t),\Egraph(t))$, hence its various properties are dictated  by the alignment dynamics on the graph, and at the same time, as we observe in theorem \ref{thm:main1}, the fluctuations of alignment dynamics are dictated by the connectivity of the underlying graph.\newline
The spectral gap,  $\lambda_2(\Delta_{\mathbf m}{\mathbb A})$,  generalizes the usual notions of graph connectivity in  terms of the \emph{Fiedler number} in case of uniform weights $m_i\equiv \nicefrac{1}{N}$. In particular, we point out that the weighted Poincar\'{e} bound \eqref{eq:laplacian} depends only on the total mass $M$ but otherwise is independent of the condition number, $\ds \frac{\max_i m_i}{\min_i m_i}$.
\end{remark}

Theorem \ref{thm:main1} describes the long time behavior of a fully-discrete C-S dynamics \eqref{eq:CS} under  a general setup based on symmetric communication kernel, $\phi_{ij}=\phi_{ji}$, which involves variable spatial weights $m_i$ and variable time stepping, $\step=\step(t_k)$ satisfying a CFL-like time-step restriction \eqref{eq:CFL}. In particular, letting $\max_k \step(t_k) \rightarrow 0$ we recover the semi-discrete CS model
\begin{equation}\label{eq:SDCS}
\left\{ \ \ \begin{split}
\frac{\textnormal{d}}{\textnormal{d}t}{\bx}_i(t)&=\bv_i(t)\\
\frac{\textnormal{d}}{\textnormal{d}t}{\bv}_i(t)&=\sum_{j\in {\mathcal N}_i} m_j(t)\phi_{ij}(t)(\bv_j(t)-\bv_i(t)). 
\end{split}\right.
\end{equation}
and theorem \ref{thm:main1} tells that
\begin{equation}\label{eq:SDresult}
\delE(t) \leq exp\Big\{-\int_0^t \lambda_2(s)\textnormal{d}s\Big\}\delE_0.
\end{equation}

\subsection{Fluctuations revisited--- $\ell^\infty$-diameter of fluctuations}
We measure the  fluctuations of velocities in terms of the  the $\ell^\infty$-diameter of the collection of velocities $\{\bv_i\}$
\[
\delV(t):=\max_{i,j}|\bv_i(t)-\bv_j(t)|.
\]
It will be convenient  to trace the scalar components which form this diameter. To this end we fix an arbitrary unit vector\footnote{The vector norm $|\cdot|$ is assumed to have it dual $|\bo|_*=\sup_{|\bv|=1}\langle \bv,\bo\rangle$.}, $|\bo|_*=1$. Since  
$|\bv|=\max_{|\bo|_*=1} \langle \bv, \bo\rangle$ then
\[
\delV(t):=\max_{|\bo|_*=1}\max_{p,q}\big(\v_p(t)-\v_q(t)\big), \qquad \v_p(t):=\langle \bv_p(t),\bo\rangle.
\]
We now trace the decay of these scalar components of velocity fluctuations, considering an arbitrary $(p,q)$ pair,  $v_p(t)-v_q(t)$, where as before we suppress the  dependence on time $t$ on the right, 
\[
\begin{split}
\v_p(t+\step)&-\v_q(t+\step) \\
 & =\v_p-\v_q + \step \sum_j m_j\phi_{pj}(\v_j-\v_p)-\step \sum_j m_j\phi_{qj}(\v_j-\v_q) \\
& = \big(1-\step \sum_j m_j\phi_{pj}\big)\v_p -\big(1-\step \sum_j m_j\phi_{qj}\big)\v_q
+ \step\sum_j m_j\phi_{pj}\v_j - \step \sum_j m_j\phi_{qj}\v_j\\
& = \big(1-\step \sum_j m_j\phi_{pj}\big)\v_p -\big(1-\step \sum_j m_j\phi_{qj}\big)\v_q \\
& \hspace*{2cm}+ \step\sum_j m_j(\phi_{pj}-c_j)\v_j - \step \sum_j m_j(\phi_{qj}-c_j)\v_j.
\end{split}
\]
In the last step we introduced arbitrary scalars $c_j$'s --- their contribution to the last two terms on the right cancel out. By the CFL condition \eqref{eq:CFL}, the first two parenthesis on the  right are positive. We now set
$c_j:=\min_{p,q}\{\phi_{pj},\phi_{qj}\}$ ---  with this choice the last two parenthesis on the right are also non-negative. Hence, if we let $\v_+$ and $\v_-$ denote the extreme values $ \v_+ :=\max_p \v_p$ and $\v_-:=\min_q \v_q$ we conclude
\[
\begin{split}
\v_p(t+\step)&-\v_q(t+\step) \\
 & \leq \big(1-\step \sum_j m_j\phi_{pj}\big)\v_+ -\big(1-\step \sum_j m_j\phi_{qj}\big)\v_- \\
 &   \hspace*{2cm} + \step\sum_j m_j(\phi_{pj}-c_j)\v_+ - \step \sum_j m_j(\phi_{qj}-c_j)\v_-\\
  & = \v_+ - \v_- -\step \sum_j m_jc_j(\v_+-\v_-) 
   =\Big(1-\step \sum_j m_jc_j\Big)\Big(\max_p \v_p-\min_q \v_q\Big) \\
  & =\big(1-\step \erg({\mathbb A})\big) \max_{p,q}\big(\v_p-\v_q), \qquad \erg({\mathbb A}):= \sum_j m_j\min_{p,q}\{\phi_{pj},\phi_{qj}\}.
  \end{split}
\]
Since $(p,q)$ is an arbitrary pair we conclude
\begin{equation}\label{eq:ergodicity}
\delV(t+\step)= \max_{|\bo|_*=1}\max_{p,q}\big(\v_p(t+\step)-\v_q(t+\step)\big)\leq \big(1-\step \erg({\mathbb A}(t) \big)\delV(t).
\end{equation}

\begin{theorem}[{\bf Decay of uniform fluctuations}]\label{thm:main2} Consider the C-S dynamics \eqref{eq:CS} with time-steps small enough such that \eqref{eq:CFL}  holds
\[
\step(t) \cdot \max_i\sum_j \phi_{ij}(t)m_j(t) \leq \hf.
\]
Then the following bound of the diameter of fluctuations holds
\begin{equation}\label{eq:kapparesult}
\delV(t_n) \leq exp\Big\{-\sum_{k=0}^{n-1} \erg\big({\mathbb A}(t_k)\big)\step(t_k)\Big\}\delV_{\!\!0}, \quad \erg\big({\mathbb A}(t)\big)=\sum_j m_j(t) \min_{p,q}\{\phi_{pj}(t),\phi_{qj}(t)\}.
\end{equation}
\end{theorem}

We emphasize that the bound \eqref{eq:kapparesult} applies to C-S dynamics with  general communication, $\{m_j(t)\phi_{ij}\}$, which need \emph{not} be symmetric, as it allows for time-dependent masses.
In particular, it applies to both the Cucker-Smale alignment model with symmetric interactions, \eqref{eq:equiCS}, $m_j=\nicefrac{1}{N}$ and the Motsch-Tadmor alignment model with non-symmetric, time-dependent interactions \eqref{eq:MT}, $m_i(t)=\frac{1}{L}\deg_i(t)$.\newline
In  case of a uniform-in-time lower bound $\erg\big({\mathbb A}(t_k)\big)\geq \eta>0$, e.g., see the particular case of all-to-all connectivity in \eqref{eq:refined}, we end up with the exponential decay
\begin{equation}\label{eq:etaresult}
\delV(t) \leq e^{-\eta t}\delV_{\!\!0}, \qquad \qquad \eta= \min_{t_k}\sum_j m_j(t_k) \min_{p,q}\{\phi_{pj}(t_k),\phi_{qj}(t_k)\}.
\end{equation}

\begin{remark}[{\bf Spectral gap vs. coefficient of ergodicity}]
The role of spectral gap in the present context of connectivity of graph goes back to Fiedler \cite{Fie73,Fie89}. In the case of equal weights, the so-called Fiedler number $\lambda_2(\Delta{\mathbb A})>0$ quantifies the algebraic connectivity of the graph $(\Vgraph,\Egraph)$ supported at vertices $\Vgraph=\{i : \, \bx_i\}$ with weighted edges $\Egraph=\{(i,j) : \, \phi_{ij}>0\}$.\newline
The inequality \eqref{eq:laplacian} is sharp in the sense that
\[
\frac{1}{M}\lambda_2(\Delta_{\mathbf m}{\mathbb A})
=\min \frac{\sum_{i,j} \phi_{ij}|\bv_i-\bv_j|^2 m_im_j}{\sum_{i,j} |\bv_i-\bv_j|^2m_im_j}.
\]
The obvious bound  that follows, 
\begin{equation}\label{eq:minphi}
\lambda_2(\Delta_{\mathbf m}{\mathbb A})
 \geq M\min_{i,j}\phi_{ij},
 \end{equation}
 shows that $\phi_{ij}(t)>0 \ \leadsto \ \lambda_2(\Delta_{\mathbf m}{\mathbb A})(t)>0$. This is the scenario  of a global, all-to-all connectivity between every pair of agents. The bound \eqref{eq:minphi} is not sharp: we may have certain edges vanish while still maintaining a connected graph, that is, the strict inequality $\lambda_2 > M\min_{ij}\phi_{ij}=0$ holds. A positive coefficient of ergodicity  allows more general scenarios, in which   pairs of agents,  positioned at say $\bx_p$ and $\bx_q$,  may lack direct communication, $\phi_{pq}=0$, but they still communicate through  an intermediate  agent positioned at $\bx_k$.  That is,  for each $(p,q)$ there exists (at least) one agent positioned at $\bx_k, \ k = k(p,q)$, which is the ‘go between’ agent so that\footnote{Of course the special case $k=p$ recovers the direct pairwise communication.} $\min\{\phi_{pk}, \phi_{qk}\} > 0$. This one-layer of communication is captured by the refined lower-bound
\begin{equation}\label{eq:refined}
\lambda_2(\Delta_{\mathbf m}{\mathbb A})
 \geq \erg({\mathbb A})=\sum_j m_j \min_{p,q}\{\phi_{pj},\phi_{qj}\} \geq M\min_{i,j}\phi_{ij}.
\end{equation}
\end{remark}

The estimate \eqref{eq:ergodicity} in its $\ell^1$-dual form for  goes back to Dobrushin \cite{Dob56}, quantifying the contractivity of  column-stochastic matrices in terms of  the so-called coefficient of ergodicity, denoted here $\erg({\mathbb A})$, \cite{IS11}. It was revisited in many follow-up works, e.g.,  its used to quantify
the relative entropy in discrete Markov processes \cite{CDZ93,CIR93} scrambling in models of opinion dynamics \cite{Kra00,Kra15} and flocking dynamics \cite[\S2.1]{MT14a}.

\subsection{Energy fluctuations revisited --- time dependent masses}\label{sec:MT}
The study of long time behavior based on $\ell^\infty$-diameter of velocity fluctuations enjoyed the advantage of addressing time-dependent masses. In contrast, our study of energy fluctuations in section \ref{sec:energy} was restricted to constant masses. Here we observe that the proof of theorem \ref{thm:main1} can be adapted to include the case of time-dependent masses, $m_i=m_i(t)$. Indeed, the time variability of the masses enters at precisely in two places:  the time invariant  total momentum in \eqref{eq:momentum}
\begin{subequations}\label{eqs:equalities}
\begin{equation}\label{eq:equa}
{\mathscr M}(t+\step)={\mathscr M}(t), \qquad {\mathscr M}(t):=\sum_i m_i(t)\bv_i(t),
\end{equation}
and the evaluation of the incremental energy fluctuations \eqref{eq:abc}
\begin{equation}\label{eq:equb}
\delE(t+\step) - \delE(t) = \sum_i m_i(t+\step) |\bv_i(t+\step)|^2 -\sum_i m_i(t)|\bv_i(t)|^2.
\end{equation}
\end{subequations}
To pursue our line of proof when $m_i=m_i(t)$, the momentum ${\mathscr M}(t+\step)$ and energy fluctuations $\delE(t+\step)$ need to be weighted by $m_i(t+\step)$ rather than $m_i(t)$. Thus, the two qualities above admit the additional terms 
\[
\sum_i |m_i(t+\step)-m_i(t)|\cdot |\bv_i(t+\step)|,
\]
 and, respectively, 
 \[
 \sum_i |m_i(t+\step)-m_i(t)|\cdot |\bv_i(t+\step)|^2,
 \]
so one needs to control the  incremental  changes  $\sum_i |m_i(t+\step)-m_i(t)|$.
Consider the example of  the M-T model \eqref{eq:MT} with metric kernel
$\phi(\bx,\bx')=\varphi(|\bx-\bx'|)$ where the time-dependent masses are then given by the degrees
\[
m_i(t)=\frac{1}{L}\deg_i(t)=\sum_j \phi_{ij}(t), \qquad \phi_{ij}(t)= \varphi(|\bx_i(t)-\bx_j(t)|).
\]
Assume that $\varphi$ is a \emph{smooth} metric communication kernel satisfying a localized Lip bound in the sense that
\[
|\varphi(r)-\varphi(s)| \leq C_1\max\{|\varphi(r)|,|\varphi(s)|\}|r-s|.
\]
 For this large class of  localized Lip bounded $\varphi$'s (which includes for example, $\varphi(r)=(1+r)^{-\beta}$ with $C_1=\beta$), we have
\[
\begin{split}
|m_i(t+\step)-m_i(t)| & \leq \sum_j \big(\varphi(|\bx_i(t+\step)-\bx_j(t+\step)|)-
\varphi(|\bx_i(t)-\bx_j(t)|)\big) \\
 & \leq C_1\sum_j\max\{\phi_{ij}(t+\step),\phi_{ij}(t)\}
\cdot \big|\big(\bx_i(t+\step)-\bx_j(t+\step)\big)-\big(\bx_i(t)-\bx_j(t)\big)\big|,
\end{split}
\]
and hence
\[
\sum_i |m_i(t+\step)-m_i(t)|\cdot|\bv_i(t+\step)| \\
  \leq 
C_1 \sum_{i,j} \max\{\phi_{ij}(t+\step),\phi_{ij}(t)\} \cdot \tau |\bv_i(t)-\bv_j(t)|   \cdot |\bv_i(t+\step)|.
\]
Now, using a uniform bound on the velocities,  $\max_i |\bv_i(t+\step)|\leq C_2$,  the exponential decay of $\delV(t)$,  \eqref{eq:etaresult}, and recalling $\phi_{ij}= L\widetilde{\phi}_{i,j}m_im_j$ in \eqref{eq:MTS}, we find
\[
\begin{split}
\sum_i |m_i(t+\step)&-m_i(t)|\cdot |\bv_i(t+\step)|  \\
 & \leq C_1C_2 \tau \cdot \delV(t) \sum_{i,j} \max\big\{\widetilde{\phi}_{ij}(t) m_i(t) m_j(t),  \widetilde{\phi}_{ij}(t+\step) m_i(t+\step) m_j(t+\step)\big\}
 \\
 & \leq C'C_2 \tau e^{-\eta t}, \qquad \qquad C':=C_1M^2\cdot\max |\phi|\cdot\delV_{\!\!0}.
\end{split}
\]
Hence, the  equalities \eqref{eqs:equalities} in the case of constant masses are now replaced by the corresponding
\begin{subequations}
\begin{equation}
\left|{\mathscr M}(t+\step)-{\mathscr M}(t)\right|\leq C'\tau e^{-\eta t},
\end{equation}
 and, respectively,
 \begin{equation}
 \Big|\Big(\delE(t+\step) -\sum_i m_i(t+\step)|\bv_i(t+\step)|^2 \Big)- \Big( \delE(t) - \sum_i m_i(t)|\bv_i(t)|^2\Big) \Big| \leq C'C_2^2\tau e^{-\eta t}.
 \end{equation}
 \end{subequations}
 Thus, presence of smoothly varying  time-dependent masses, accounts for additional terms which have a bounded accumulated effect.
 One can then study the long time behavior based on energy fluctuations in the presence of time-dependent masses, similar to our  discussion in the next section, of flocking/swarming phenomena  with constant masses.
 
\section{Flocking and Swarming}
The phenomena of \emph{flocking} or \emph{swarming} require  the emergence of coordinated long time behavior of velocities, while the crowd of agents remains contained within finite diameter
\begin{equation}\label{eq:diameter}
D(t):=\max_{i,j}|\bx_i(t)-\bx_j(t)| \leq D_+ <\infty.
\end{equation}
The emerging behavior of velocities in intimately linked to the decay bounds of energy fluctuations.
Indeed,  \eqref{eq:result} and its corresponding semi-discrete \eqref{eq:SDresult} imply that \emph{if} the weighted graph of communication remains sufficiently strongly connected in the sense that $\lambda_2(t)$ has diverging tail, then by \eqref{eq:mean}
\begin{equation}\label{eq:heavy}
\begin{split}
\int^\infty \lambda_2(s)\textnormal{d}s=\infty \  
  \ \leadsto \  \ \sum_i |\bv_i(t)-\overline{\bv}(t)|^2m_i \leq exp\Big\{{\small -\int_0^t \lambda_2(s)\textnormal{d}s}\Big\}\delE_0\stackrel{t\rightarrow \infty}{\longrightarrow}0.
 \end{split}
\end{equation}
In particular, since the mean velocity is an invariant of the flow, 
\[
\overline{\bv}(t):=\frac{1}{M}\sum_j m_j\bv_j(t)=\overline{\bv}_0,
\]
\eqref{eq:heavy} tells us that a heavy-tailed $\lambda_2(t)$ implies  the long time behavior  of  the velocities  that align along the initial  mean, $\bv_i(t) \stackrel{t\rightarrow \infty}{\longrightarrow} \overline{\bv}_0$

 \begin{remark}[{\bf Emerging velocity in presence of time-dependent masses}] In case of constant masses, the mean velocity $\overline{\bv}_(t)$ remains invariant in times, and the decay of velocity fluctuations implies the emergence of  $\overline{\bv}_0$ as the limiting velocity.  The presence of time-dependent masses, however, leaves open the question of what is the emerging velocity. Thus, for example, in case of the M-T dynamics \eqref{eq:MT}, we expect  that velocities will align along the corresponding mean $\overline{\bv}$
 \[
 |\bv_i(t) -\overline{\bv}(t)| \stackrel{t\rightarrow \infty}{\longrightarrow}0,
 \qquad \overline{\bv}(t):= \frac{1}{\sum_j \deg_j(t)}\sum_j \deg_j(t)\bv_j(t). 
 \]
 The question is if and when the emerging  \emph{limiting} velocity,
 $\ds \lim \limits_{t\rightarrow \infty} \frac{1}{\sum_j \deg_j(t)}\sum_j \deg_j(t)\bv_j(t)$, exists.
  \end{remark}

\subsection{Long-range interactions} 
But when does $\lambda_2(t)$ satisfy the `heavy-tail' condition sought in  \eqref{eq:heavy}? this  is a central question for tracing the phenomenon of flocking. It was addressed in many references, starting with the original \cite{CS07a,CS07b} followed by \cite{HT08,HL09b}; see \cite{CFTV10,Shv21} and the references therein. A definitive answer is provided in case of \emph{long-range kernels}, 
\begin{equation}\label{eq:long}
\phi(\bx,\bx')\gtrsim \frac{1}{(1+|\bx-\bx'|)^\beta}, \quad \beta>0.
\end{equation}
In this case we bound the tail of the spectral gap,
$\lambda_2(t)=\lambda_2\big(\Delta_{\mathbf m}{\mathbb A}(t)\big)$,
\begin{equation}\label{eq:connect}
\lambda_2\big(\Delta_{\mathbf m}{\mathbb A}(t)\big) 
  \geq M\min \phi_{ij}(t) \gtrsim \frac{M}{\big(1+D(t)\big)^\beta}\geq
\frac{M}{(1+D_0 + \delV_{\!\!0}\cdot t)^\beta}.
\end{equation}
The first inequality on the right  follows from \eqref{eq:minphi}, the second follows from \eqref{eq:long} and the third follows from a uniform bound on the diameter of velocities\footnote{\label{foot:max}The result follows without appealing to the bound on diameter of velocities \eqref{eq:ergodicity}. Instead, a simpler  maximum principle argument follows from the CFL condition \eqref{eq:CFL}, 
\[
|\bv_i(t+\step)|\leq \Big(1-\step\sum_j\phi_{ij}m_j\Big)|\bv_i(t)|+\step\sum_j \phi_{ij}m_j|\bv_j(t)| \leq  \max_j|\bv_j(t)| \leq \ldots \leq v_+(0), \quad v_+(0)=\max_i |\bv_i(0)|,
\]
and integration of  \eqref{eq:CS}${}_1$, and likewise, \eqref{eq:SDCS}${}_1$ in the semi-discrete case, imply $D(t) \leq D_0+2v_+(0)\cdot t$.},
\[
D(t)\leq D_0 + \int^t \delV(s)\textnormal{d}s \leq D_0+\delV_{\!\!0}\cdot t, 
\]
  and hence the heavy-tailed bound, $\lambda_2(t)\gtrsim (1+t)^{-\beta}$ for $\beta\leq 1$. We conclude that  the C-S dynamics \eqref{eq:CS} with long-range communication \eqref{eq:long},  $\beta\leq 1$, admits \emph{unconditional flocking} 
\[
\sum_i |\bv_i(t)-\overline{\bv}_0|^2m_i \lesssim
\left\{\begin{array}{ll}
exp\big\{-\frac{M}{(1-\beta)\delV_{\!\!0}}\big(1+D_0+\delV_{\!\!0}\cdot t\big)^{1-\beta}\big\} & \beta<1\\ \\
\big(1+D_0+\delV_{\!\!0}\cdot t\big)^{\ds -\nicefrac{M}{\delV_{\!\!0}}} & \beta=1
\end{array}\right\} \stackrel{t\rightarrow \infty}{\longrightarrow}0.
\]
We can now use a bootstrap argument --- the fractional exponential decay of the fluctuations of order $1-\beta>0$ implies that the diameter remains uniformly bounded and hence  uniform bounded connectivity
\[
D(t)\leq D_0 + \int^t \delV(s)\textnormal{d}s \leq D_+ \ \ \leadsto \ \ \lambda_2(t) \geq \eta:=\frac{1}{(1+D_+)^\beta}.
\]
 Revising \eqref{eq:connect} with a finite diameter $\leq D_+$, yields the improved exponential bound
 $\lesssim e^{-\eta t}$. 
 A similar argument applies in the borderline case of $\beta=1$:  clearly, if $\delV_{\!\!0}<M$ then the finite tail of $\lesssim (1+t)^{-\nicefrac{M}{\delV_{\!\!0}}}$ will lead to a finite diameter; and indeed, since $\delV(t)$ is decaying, we will eventually reach the threshold $\delV(t_c)<M$ and exponential decay follows thereafter. We summarize.
 \begin{theorem}[{\bf Flocking/swarming with long range kernels}]\label{thm:flocking1}
 Consider the C-S dynamics \eqref{eq:CS} driven by long-range kernel \eqref{eq:long}, $\beta\leq1$. Then the crowd of agents has finite support $D_+$ and  there is exponential decay
 of fluctuations around the mean velocity,
\begin{equation}\label{eq:exp}
\sum_i |\bv_i(t)-\overline{\bv}_0|^2m_i \lesssim
e^{-\eta t} \delE_0, \qquad \eta=\frac{1}{(1+D_+)^\beta}.
\end{equation}
\end{theorem}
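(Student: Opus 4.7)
The plan is to combine the energy-fluctuation estimate of Theorem \ref{thm:main1} with a crude lower bound on $\lambda_2(t)$ expressed solely through the position diameter $D(t)$, then to close the argument by a bootstrap that upgrades sub-exponential decay to genuine exponential decay.

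First I would invoke the spectral bound \eqref{eq:minphi} together with the long-range assumption \eqref{eq:long} to get
\[
\lambda_2(t)=\lambda_2\bigl(\Delta_{\mathbf m}{\mathbb A}(t)\bigr)\;\geq\; M\min_{i,j}\phi_{ij}(t)\;\gtrsim\;\frac{M}{(1+D(t))^{\beta}}.
\]
To turn this into a quantitative tail I need an \emph{a priori} linear bound on $D(t)$. Such a bound follows from the maximum-principle argument of footnote \ref{foot:max} (or alternatively from Theorem \ref{thm:main2}, whose coefficient of ergodicity is nonnegative, so $\delV(t)\leq\delV_{\!\!0}$). Integration of \eqref{eq:CS}${}_1$ then yields $D(t)\leq D_0+\delV_{\!\!0}\cdot t$, whence $\lambda_2(t)\gtrsim (1+D_0+\delV_{\!\!0}\cdot t)^{-\beta}$.

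Next I would plug this heavy-tailed lower bound into \eqref{eq:result} (or its semi-discrete counterpart \eqref{eq:SDresult}) and compute the time integral explicitly. For $\beta<1$ the integral grows like $(1+D_0+\delV_{\!\!0}\cdot t)^{1-\beta}/((1-\beta)\delV_{\!\!0})$, giving a stretched-exponential (fractional) decay of $\delE(t)$; for $\beta=1$ the integral grows like $(M/\delV_{\!\!0})\log(1+D_0+\delV_{\!\!0}\cdot t)$, giving polynomial decay of order $-M/\delV_{\!\!0}$. Since $\sum_i|\bv_i-\overline{\bv}_0|^2 m_i=M\delE(t)$ and $\overline{\bv}(t)\equiv\overline{\bv}_0$ by momentum conservation \eqref{eq:momentum}, both cases already confirm $\bv_i(t)\to\overline{\bv}_0$.

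The crux is the bootstrap that produces the uniform diameter bound $D_+$ and upgrades the fractional/polynomial rate to the exponential rate $\eta=(1+D_+)^{-\beta}$ asserted in \eqref{eq:exp}. Since $\delV(t)$ is non-increasing by Theorem \ref{thm:main2} and the fractional/polynomial decay of $\delE$ implies $\delV(t)\to 0$, the integral $\int_0^\infty\delV(s)\,\d s$ is finite whenever the decay rate is integrable: in the case $\beta<1$ this is automatic, while in the borderline case $\beta=1$ one uses the monotonicity of $\delV$ to find a finite time $t_c$ beyond which $\delV(t)\leq M$, after which the polynomial exponent $-M/\delV(t_c)$ exceeds one and integrability follows. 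Once $D(t)\leq D_+<\infty$ is uniform, the crude lower bound becomes $\lambda_2(t)\geq \eta>0$, and re-inserting into \eqref{eq:result} delivers \eqref{eq:exp}. The main obstacle, as indicated, is precisely this borderline case $\beta=1$, where the bootstrap requires the monotone-decay property of $\delV$ to guarantee that the effective rate eventually becomes integrable; everything else reduces to routine estimates.
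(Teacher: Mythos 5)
Your proposal follows the paper's own argument essentially step for step: the lower bound $\lambda_2(t)\gtrsim M(1+D(t))^{-\beta}$ via \eqref{eq:minphi} and \eqref{eq:long}, the a priori linear growth $D(t)\leq D_0+\delV_{\!\!0}\cdot t$ from the maximum principle, the resulting stretched-exponential (resp.\ polynomial of order $-M/\delV_{\!\!0}$) decay, and the bootstrap to a uniform diameter $D_+$ with the borderline case $\beta=1$ handled by waiting until $\delV(t_c)<M$. This is correct and matches the paper's proof; no substantive differences to report.
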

The precise exponential  bound, $\eta=\eta(\beta,D_0,v_+)$, was captured in \cite{HL09b} using an elegant argument based on a proper Liapunov functional 
for C-S with metric kernel and uniform masses.\newline

\begin{remark}[{\bf No uniform bound}] We distinguish between two types of bounds on  velocity fluctuations ---  the $\ell^2$-based energy fluctuations, theorem \ref{thm:main1}, and the $\ell^\infty$ bounds, theorem \ref{thm:main2}  or at least the uniform bound on velocities, see footnote \ref{foot:max}. Suppose we try to pursue  a purely $\ell^2$-based argument 
  for flocking behavior. The energy bound \eqref{eq:result} implies
  the uniform-in-time bound
  $\max_i|\bv_i(t)-\overline{\bv}_0| \leq C\sqrt{N}$ which in turn yields a bound on the diameter $D(t) \leq D_0+2C\sqrt{N}t$. We now use the same bootstrap argument as before to find a uniform-in-time bound on the diameter $D(t) \lesssim D_+(N):=N^{\frac{\beta}{2(1-\beta)}}$. We conclude an exponential flocking of rate
  \[
  \sum_i |\bv_i(t)-\overline{\bv}_0|^2m_i \lesssim D_+(N)
  e^{-D_+(N)t}, \qquad D_+(N)=N^{\frac{\beta}{2(1-\beta)}}.
  \]
  As expected, the fluctuations bound grows with $N$. However, the point to note here is that the exponential decay in time enforces exponential alignment  bound, uniform in $t$ \emph{and} $N$  when $t\gg  N^{\frac{\beta}{2(1-\beta)}}$. For example,  $\beta=\nicefrac{1}{4}$ requires a moderate time of  $t\gg N^{\nicefrac{1}{6}}$ before  exponential decay  takes place.
  \end{remark} 

Theorem \ref{thm:flocking1} was derived based on considerations of energy fluctuations. Similarly,  we  can proceed using the $\ell^\infty$-diameter fluctuations of theorem \ref{thm:main2}. Its semi-discrete limit $\max_k\tau(t_k)\rightarrow 0$ reads
\[
\max_{i,j}|\bv_i(t)-\bv_j(t)|\leq exp\Big\{-\int_0^t \sum_j m_j(s)\min_{p,q}\{\phi_{pj}(s),\phi_{qj}(s)\}\textnormal{d}s\Big\}\delV_{\!\!0}. 
\]
Here, we generalize theorem \ref{thm:flocking1} to the case of time-dependent masses. Using a bootstrap argument as before we end up with
 \begin{theorem}[{\bf Flocking/swarming with long range kernels --- time-dependent masses}]\label{thm:flocking2}
 Consider the C-S dynamics \eqref{eq:CS} with possibly time-dependent masses, $m_i=m_i(t)$, driven by long-range kernel \eqref{eq:long}, $\beta\leq1$. Then the crowd of agents has finite support $D_+$ and  there is exponential decay
 of fluctuations of velocities,
\begin{equation}\label{eq:exp2}
\max_{i,j} |\bv_i(t)-\bv_j(t)| \lesssim
exp\Big\{-\ds \eta \int_0^t \sum_j m_j(s) \textnormal{d}s\Big\} \delV_{\!\!0}, \qquad \eta=\frac{1}{(1+D_+)^\beta}.
\end{equation}
\end{theorem}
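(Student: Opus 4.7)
The plan is to mimic the bootstrap used for Theorem \ref{thm:flocking1}, but to replace the energy-fluctuations bound of Theorem \ref{thm:main1} by the $\ell^\infty$-diameter bound of Theorem \ref{thm:main2}, which is the one that accommodates time-dependent masses directly. First I pass to the semi-discrete limit $\max_k\step(t_k)\to 0$ in \eqref{eq:etaresult} to obtain
\[
\delV(t)\leq \exp\Big\{-\!\int_0^t \erg({\mathbb A}(s))\,\d s\Big\}\delV_{\!\!0},\qquad \erg({\mathbb A}(s))=\sum_j m_j(s)\min_{p,q}\{\phi_{pj}(s),\phi_{qj}(s)\}.
\]
The key observation is that for every triple of indices $(p,q,j)$ the distances $|\bx_p(s)-\bx_j(s)|$ and $|\bx_q(s)-\bx_j(s)|$ are bounded by the diameter $D(s)$, so the long-range assumption \eqref{eq:long} gives $\min_{p,q}\{\phi_{pj}(s),\phi_{qj}(s)\}\gtrsim (1+D(s))^{-\beta}$ uniformly in $j$. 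Summing over $j$ yields $\erg({\mathbb A}(s))\gtrsim (1+D(s))^{-\beta} M(s)$ with $M(s):=\sum_j m_j(s)$.

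Next comes the bootstrap. The maximum-principle argument of footnote \ref{foot:max} uses only the CFL condition \eqref{eq:CFL} and not the symmetry of the communication array, so it survives for time-dependent masses and gives $|\bv_i(t)|\leq v_+(0)$, hence the a-priori linear growth $D(t)\leq D_0+2v_+(0)\,t$. Inserting this crude bound back into the expression for $\erg$ shows that, for $\beta\leq 1$, the tail $\int^t M(s)(1+D(s))^{-\beta}\,\d s$ diverges, so $\delV(t)$ decays at a sub-exponential rate (fractional-exponential for $\beta<1$, algebraic for $\beta=1$); integration against $D(t)\leq D_0+\int_0^t\delV(s)\,\d s$ then produces a uniform confinement $D(t)\leq D_+<\infty$, exactly as in the constant-mass argument sketched after Theorem \ref{thm:flocking1}.

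Once $D(t)\leq D_+$ has been secured, the pointwise lower bound $(1+D(s))^{-\beta}\geq (1+D_+)^{-\beta}=:\eta$ can be factored out of the ergodicity coefficient, giving $\erg({\mathbb A}(s))\geq \eta M(s)$. The semi-discrete limit of \eqref{eq:etaresult} then collapses directly to the target estimate
\[
\delV(t)\lesssim \exp\Big\{-\eta\!\int_0^t M(s)\,\d s\Big\}\delV_{\!\!0},
\]
which is \eqref{eq:exp2}.

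The main obstacle I anticipate lies precisely in closing the bootstrap when the masses vary: divergence of the tail $\int^t M(s)(1+s)^{-\beta}\,\d s$ requires a uniform-in-time lower bound on $M(s)$, and the a-priori step above tacitly uses that the increments $\sum_i|m_i(t+\step)-m_i(t)|$ only contribute a bounded accumulated correction. Both points are exactly what was set up in Section \ref{sec:MT} for smooth communication kernels; in the benchmark Motsch--Tadmor application $M(s)=L^{-1}\sum_j\deg_j(s)$, a uniform lower bound on $\deg_j(s)$ follows from the confinement $D(s)\leq D_+$ together with the continuity and positivity of $\varphi$ on $[0,D_+]$, which closes the argument self-consistently.
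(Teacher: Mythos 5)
Your proposal follows essentially the same route as the paper: the paper likewise passes to the semi-discrete limit of the $\ell^\infty$-diameter bound of Theorem \ref{thm:main2} (the one that tolerates time-dependent masses), lower-bounds the ergodicity coefficient by $(1+D(t))^{-\beta}\sum_j m_j(t)$ via the long-range kernel, and closes with the same bootstrap used for Theorem \ref{thm:flocking1}. The caveat you flag about needing $\int_0^t\sum_j m_j(s)\,\textnormal{d}s$ to grow is exactly the point the paper addresses in the remark following the theorem for the Motsch--Tadmor scaling, so your account is, if anything, a more explicit version of the paper's sketch.
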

Observe that in the example of M-T model \eqref{eq:MT} the scaling alluded in footnote \ref{foot:MT}, $M={\mathcal O}(1)$, implies $\ds \int_0^t \sum_j m_j(s) \textnormal{d}s\geq Ct$ and hence we end up with an exponential decay 
$e^{-\eta C t}$.

\smallskip
The arguments that led to theorem \ref{thm:flocking1} and the new theorem \ref{thm:flocking2} demonstrate a rather general methodology for studying flocking, swarming and more general emerging phenomena in alignment based dynamics. It consists of two main ingredients: 

\begin{itemize}
 \item  Decay of energy fluctuations. This is tied to spectral analysis of the  dynamic graph $\big(\Vgraph(t),\Egraph(t)\big)$. In typical cases, the dynamics is equipped with an intrinsic `energy' and energy fluctuations. 

\item Bound on the velocities --- either a uniform bound on velocities or on $\ell^\infty$-diameter of velocities fluctuations. In either case, the purpose is to trace the size of the spatial diameter and show that the crowd does not disperse,
 $D(t) \leq D_+$. In general, this is the more intricate bound to prove.
  \end{itemize}
 
 \noindent
 As an example we mention alignment dynamics with external forcing \cite{ST20a}. Other examples include C-S dynamics with \emph{matrix} communication kernels, and C-S dynamics in which both, alignment and attraction, take place. We continue with this example in the context of \emph{anticipation dynamics}.

\subsection{From anticipation to Cucker-Smale dynamics}
Particles are driven by the external forces induced by the environment and/or  by other particles. The dynamics of social particles, on the other hand,   is driven by  \emph{probing} the environment  --- living organisms, human interactions and sensor-based agents have \emph{senses and sensors}, with which they actively probe the environment (and hence they are commonly viewed as `active particles' \cite{BDT17-19}). A distinctive feature of active particles in probing the environment is \emph{anticipation} --- the dynamics  is not driven instantaneously, but reacts to positions $\bx^\step(t):=\bx(t)+\step \bv(t)$, \emph{anticipated} at $t+\step$, where $\step>0$ is an anticipation time increment.  A general framework for anticipation dynamics, driven by pairwise interactions induced by  radial  potential $U=U(r)$, reads
\vspace*{-0.2cm}
\[
\left\{\quad
\begin{split}
\frac{\textnormal{d}}{\textnormal{d}t}{\bx}_i(t)&=\bv_i(t)\\
\frac{\textnormal{d}}{\textnormal{d}t}{\bv}_i(t)&= -\frac{1}{N}\sum_{j=1}^N\nabla U(|\bx^\step_i(t)-\bx^\step_j(t)|), \qquad \bx^\tau_k(t):=\bx_k(t)+\tau \bv_k(t).
\end{split}
\right.
\]
The alignment is encoded here in the anticipated time --- indeed, expanding 
the RHS in (the assumed small) $\step$ we obtain, \cite{ST21a},
\begin{equation}\label{eq:system-anticipation}
\frac{\textnormal{d}}{\textnormal{d}t} \bv_i(t)=\overbrace{-\frac{1}{N}\sum_{j} \nabla U(|\bx_j-\bx_i|)}^{\text{repulsion+attraction}} + \overbrace{\frac{\tau}{N}\sum_{j\in {\mathcal N}_i} \bbphi_{ij}(\bv_j-\bv_i)}^{\text{alignment}}, \qquad \Phi_{ij}=D^2U(|\bx_i-\bx_j|).
\end{equation}

\vspace*{0.5cm}\noindent
Thus, we \emph{derive} a general class of 3Zone models  \eqref{eq:system-anticipation}, where the first terms on the right account for repulsion/attraction, depending whether their scalar amplitudes $U'_{ij}:=U'(|\bx_i-\bx_j|)<0$ or, respectively,  $U'_{ij}>0$,  while the second term on the right accounts for  an alignment with \emph{matrix} coefficients, ${\bbphi}_{ij}= D^2U(|\bx_i-\bx_j|)$, see figure \ref{fig:Lennard-Jones}. 

\vspace*{0.5cm}
\begin{figure}[h]
    \begin{minipage}{120mm}
  \begin{center}
    \includegraphics[width=0.29\textwidth]{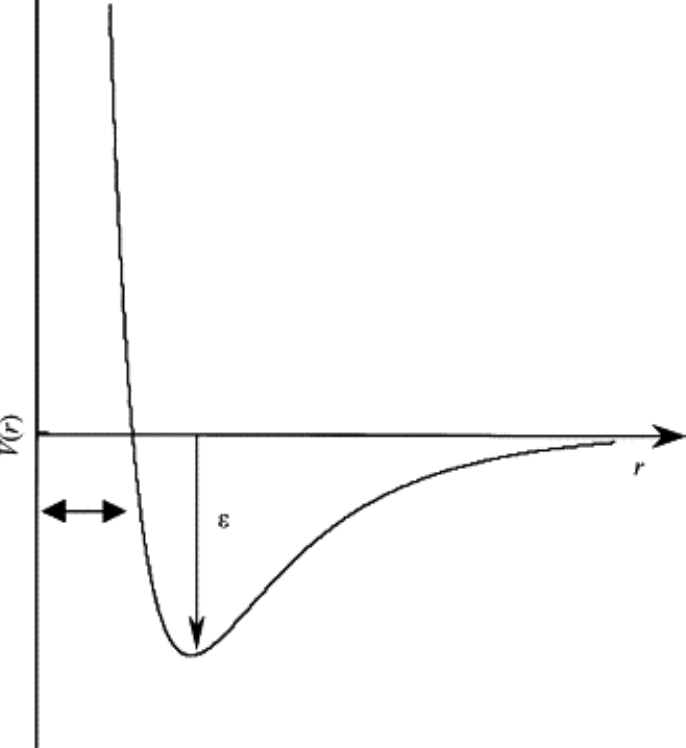}
   \caption{Potential $U(r)$}\label{fig:Lennard-Jones}  
      \end{center}
      \end{minipage}
            \end{figure}

\noindent
This leads us to  consider an even larger class of 3Zone models with repulsion/attraction induced by potential $U$ and  alignment term induced by a separate scalar symmetric kernel, $\phi_{ij}=\phi(\bx_i,\bx_j)$ (independent of $U$),

\begin{equation}\label{eq:anticipation}
\frac{\textnormal{d}}{\textnormal{d}t} \bv_i(t)=-\frac{1}{N}\sum_{j} \nabla U(|\bx_j-\bx_i|) + \frac{\tau}{N}\sum_{j: \phi_{ij}>0} \phi(\bx_i,\bx_j)(\bv_j-\bv_i).
\end{equation}
The special case of metric-based kernel $\phi_{ij}=\varphi(|\bx_i-\bx_j|)$ recovers the C-S dynamics \eqref{eq:equiCS}, $m_i\equiv\nicefrac{1}{N}$. For the special case  of anticipation \eqref{eq:system-anticipation} we have $\bbphi_{ij}\geq U''(|\bx_i-\bx_j|){\mathbb I}$. 

\medskip
The  energy fluctuations associated with \eqref{eq:anticipation} 
\[
{\delE}(t):=\frac{1}{2N}\sum_i |\bv_i(t)-\overline{\bv}|^2 + \frac{1}{2N^2}\sum_{i,j}U(|\bx_i(t)-\bx_j(t)|),
\]
 are dissipated due to alignment at a precise rate dictated by local velocity fluctuations,
\[
\frac{\textnormal{d}}{\textnormal{d}t} {\delE}(t) =-\frac{\step}{2N^2}\sum_{i,j}\phi_{ij}(t) |\bv_{i}-\bv_j|^2.
\]
We assume a smooth radial potential so that $U'(0)=U(0)=0$. It follows that the  class of  convex potentials and `fat-tailed'  kernels such that
\begin{equation}\label{eq:cond}
U''(|\bx_i-\bx_j|)+\phi_{ij}\gtrsim \langle |\bx_i-\bx_j|\rangle^{-\gamma}, \quad \gamma<\nicefrac{4}{5},
\end{equation}
  guarantee decay of energy fluctuations,  $\ds {\delE}(t) \leq C_0 exp\{-t^{\frac{4-5\gamma}{4-3\gamma}}\}$, which in turn implies asymptotic flocking  towards the average velocity, $\overline{\bv}=\frac{1}{N}\sum_j \bv_j$, \cite{ST21a}. Moreover, agents asymptotically congregate in space, forming a  traveling wave  dictated by the presence of an attractive  potential $U$, e.g., a quadratic $U$ leads to a limiting harmonic oscillator.
\cite{ST20a}. 

\smallskip
\paragraph{{\bf Open questions}} The arguments  above exclude two important features in collective dynamics: since \eqref{eq:cond} implies $U$ is increasing,  it does not address  the role of \emph{repulsion} in shaping the emergent behavior. The large time behavior of  2Zone repulsion-attraction models were discussed in, e.g., 
\cite{CDMBC07,FHK11,FH13,DDMW15,CCP17,CFP17}.
 The  corresponding  question for the full 3Zone  model, in which attraction, alignment and repulsion co-exist,  is mostly open. 
 
 Another key aspect is the long-range alignment sought by the 'heavy-tailed' kernels in \eqref{eq:cond}  which does not address the local character of self-organized dynamics. The long time collective behavior based on \emph{short-range} protocols hinges on the \emph{graph connectivity} of the crowd, realized by the \emph{adjacency matrix} ${\mathbb A}(t):=\{\phi_{ij}(t)\}$. Short-range interactions may lead to instability. This can be traced by the \emph{graph Laplacian} $\Delta {\mathbb A}(t)$:  while the initial configuration of the crowd is assumed to form one connected cluster expressed by the positivity of its spectral gap  $\lambda_2(\Delta {\mathbb A}(0)>0$, it may break down  into two or more disconnected clusters at a finite time when $\lambda_2(\Delta{\mathbb A}(t_c)=0$. Flocking analysis with short-range kerenls can be found in \cite{JE07,GPY16,Car17,MPT19,ST20b,DR21,Tad21}.
 
 \section{Large crowd dynamics}
The question of instability for a fixed number of $N$ agents governed by short-range alignment  is better addressed in the context of \emph{large crowd} dynamics of $N\gg 1$ agents. The latter  is realized by the empirical distribution
\[
f_N(t,\bx,\bv):=\frac{1}{M}\sum_i m_i\delta (\bx-\bx_i(t))\otimes \delta(\bv-\bv_i(t)).
\]
The large crowd dynamics is captured by its first two $\bv$ moments which are assumed to exist, \cite{Shv21}: 
\[
\rho(t,\bx)=\lim_{N\rightarrow \infty} \int f_N(t,\bx,\bv)\dv, \quad \rho\bu(t,\bx)=\lim_{N\rightarrow \infty} \int \bv f_N(t,\bx,\bv)\dv
\]
These are the  density and momentum which encode the macroscopic description of the agents based \eqref{eq:anticipation} (we abbreviate $\square=\square(t,\bx), \square'=\square(t,\by)$)

\begin{equation}\label{eq:hydro}
\left\{\begin{split}
\rho_t +\nabla_\bx\cdot (\rho \bu)&=0\\
(\rho\bu)_t + \nabla_\bx\cdot (\rho\bu\otimes \bu +{\mathbb P})&= 
\tau \int_{\by\in \Omega} \phi(\bx,\by)(\bup-\bu)\rho\rho'\dy
-\rho\nabla U * \rho(t,\bx)
\end{split}\right.
\end{equation}
There are several ingredients in the macroscopic description: the pressure (Reynolds stress) tensor, $\displaystyle {\mathbb P}(t,\bx):=\lim_{N\rightarrow \infty} \int (\bv-\bu)(\bv-\bu)^\top f_N(t,\bx,\bv)\dv$, encodes the second-order $\bv$ moments of $f_N$. The closure of \eqref{eq:hydro} is imposed by assuming a limiting distribution
at thermal equilibrium -- a Maxwellian. But there is no generic closure  in the present context of collective dynamics, since agents maintain their own  detailed energy balance  which is beyond the realm of collective motion. The two terms on the right capture scalar alignment and respectively attraction/repulsion induced by the potential $U$.  

\noindent
\subsection{Short-range interactions} 
For simplicity, we  ignore the role of attraction/repulsion and conclude with three examples which trace the 
 flocking behavior of the purely alignment hydrodynamics \eqref{eq:hydro} with $U\equiv 0$.
 
\smallskip\noindent
{\bf Non-vacuous dynamics}.
In the first example, we consider  the dynamics in the $2\pi$-torus driven by bounded short-range kernels, $\phi(\bx,\by)$, localized along the diagonal 
\begin{equation}\label{eq:bdd}
\frac{1}{\Lambda}\mathds{1}_{R_0}(|\bx-\by|) \leq \phi(\bx,\by) \leq {\Lambda}\mathds{1}_{2R_0}(|\bx-\by|), \quad R_0 \ll \pi.
\end{equation}
It follows that strong solutions with non-vacuous  density 
$\rho(t,\cdot)\gtrsim (1+t)^{-\nicefrac{1}{2}}$ flock around the limiting velocity $\overline{\bv}$ due to the  decay of energy fluctuations $\delE(t)\rightarrow 0$, \cite[Theorem 1.1]{ST20b}. As we noted in \cite[theorem 3.3]{Tad21}, the decay of energy fluctuations is independent of the specific closure of the pressure --- what really matters is the  non-vanishing density, the connectivity of the $\textnormal{supp}\rho(t,\cdot)$  which enables to propagate information of alignment.

\smallskip\noindent
{\bf Topological interactions}.
The non-vacuous lower bound $(1+t)^{-\nicefrac{1}{2}}$ is not sharp. As a second example we mention a topologically-based \emph{singular} communication  kernel, 
corresponding to \eqref{eq:whatismu}
\begin{equation}\label{eq:topo}
 \phi(\bx,\by) \sim  \mathds{1}_{R_0}(|\bx-\by|)\times \frac{1}{\textnormal{dist}^d_\rho(\bx,\by)},  
\end{equation}
which involves the density weighted distance $\displaystyle \textnormal{dist}_\rho(\bx,\by)=\Big(\int _{{\mathcal C}(\bx,\by)}\hspace*{-0.3cm}\d\rho(t,\bz)\Big)^{1/d}$.
it follows  that  smooth  solutions satisfying the relaxed  non-vacuous condition, $\rho(t,\cdot)\gtrsim (1+t)^{-1}$, must flock, \cite{ST20b}. Again, no vacuum is a key aspect  which  
enables the propagation of information: as long as no vacuous  islands are formed, alignment dictates flocking behavior.

\smallskip\noindent
{\bf Multi-species}. Our third example involves multi-species dynamics
\[
\left\{\begin{split}
(\rho_\alpha)_t +\nabla_\bx\cdot (\rho_\alpha \bu_\alpha)&=0\\
(\rho_\alpha\bu_\alpha)_t + \nabla_\bx\cdot (\rho_\alpha\bu_\alpha\otimes \bu_\alpha +{\mathbb P}_\alpha)&= 
\tau \int_{\by\in \Omega} \varphi_{\alpha\beta}(|\bx-\by|)(\bup_\beta-\bu_\alpha)\rho_\alpha\rho'_\beta\dy.
\end{split}\right.
\]
In this case, different species tagged by the identifiers $\alpha,\beta\in{\mathcal I}$, are distinguished by  their different protocol of communication with the environment of other species, $\phi_{\alpha\beta}$.
 In \cite{HT21} it was shown that if the different species maintain  non-vacuous densities $\rho_\alpha(t,\cdot)\gtrsim (1+t)^{-1}$ and if the communication array ${\mathbb A}(r):=\{\phi_{\alpha\beta}(r)\}$ forms a \emph{connected graph}, $\lambda_2 \big(\Delta{\mathbb A}(r)\big)\gtrsim (1+r)^{-\beta}$ with heavy-tail, $\beta<1$, then flocking follows, $\displaystyle \bu_\alpha \stackrel{t\rightarrow \infty}{\longrightarrow} \overline{\bu}:=\frac{1}{|{\mathcal I}|}\sum_{\alpha\in {\mathcal I}} \bu_\alpha$.

\end{document}